\newtheorem{thm}{Theorem}
\newtheorem{cor}[thm]{Corollary}
\newtheorem{prop}[thm]{Proposition}
\theoremstyle{definition}
\theoremstyle{remark}
\theoremstyle{definition}
\theoremstyle{definition}
\theoremstyle{remark}
\numberwithin{equation}{section}
\newcommand{\aaa}{\mathfrak{a}}
\newcommand{\bbb}{\mathfrak{b}}
\newcommand{\mmm}{\mathfrak{m}}
\newcommand{\nnn}{\mathfrak{n}}
\newcommand{\locala}{(A,\mathfrak{m},k)}
\newcommand{\localb}{(B,\mathfrak{n},l)}
\newcommand{\Tor}{\mathrm{Tor}}
\newcommand{\HH}{\mathrm{H}}
\begin{document}

\title[On a theorem of Gulliksen on the homology of local rings]{On a theorem of Gulliksen on the homology of local rings}
\author{Samuel Alvite, Nerea G. Barral and Javier Majadas}%
\address{Departamento de Matem\'aticas, Facultad de Matem\'aticas, Universidad de Santiago de Compostela, E15782 Santiago de Compostela, Spain}%
\email{ samuelalvite@gmail.com, nereabarral@gmail.com, j.majadas@usc.es}%

\thanks{$^{(\star)}$ This work was partially supported by Agencia Estatal de Investigaci\'on (Spain), grant MTM2016-79661-P (European FEDER support included, UE) and by Xunta de Galicia through the Competitive Reference Groups (GRC) ED431C 2019/10}

\keywords{Koszul homology, local ring}%
\thanks{2010 {\em Mathematics Subject Classification.} 13D03, 13H05}

\begin{abstract}
We show that a modification of the proof of a result of Gulliksen gives an elementary proof of the following important theorem by Avramov: if $f:\locala \to \localb$ is a homomorphism of noetherian local rings and $B$ is of finite flat dimension over $A$, then the homomorphism induced in André-Quillen homology modules $\HH_2(A,l,l)\to \HH_2(B,l,l)$ is injective.

\end{abstract}

\maketitle

\setcounter{section}{-1}

In 1969 Gulliksen showed that in a noetherian local ring, an ideal of finite projective dimension with free first Koszul homology module is generated by a regular sequence \cite[1.4.9]{GL}. His proof is elementary, using only basic notions of differential graded commutative algebras.\\

In \cite{Ro}, Rodicio showed that the idea of Gulliksen's proof can also be used to give the following result: if $\aaa$ is an ideal of a noetherian local ring $\locala$ such that its associated first Koszul homology module is $A /\aaa$-free, then the canonical homomorphism in Andr\'e-Quillen homology (\cite{An1974}, \cite{QMIT}) $\HH_3(A/\aaa,k,k)\to \HH_2(A,A/\aaa,k)$ vanishes. As a consequence he obtains generalizations of the above Gulliksen's result as well as another result of André on pairs of complete intersections.\\

In between these papers, Avramov proved the following apparently unrelated result: if $f:\locala \to \localb$ is a homomorphism of noetherian local rings and $B$ is of finite flat dimension over $A$, then the homomorphism $\HH_2(A,l,l)\to \HH_2(B,l,l)$ is injective (first in the flat case in \cite{Av1975}, and then in full generality, even with higher dimensional analogues in \cite{Av1982}). This result has many interesting applications; for instance it allowed Avramov himself to prove that the complete intersection property localizes.\\

 We follow the path in \cite{Ro} further exploiting Gulliksen's idea. We show here that a slight modification of Gulliksen's proof gives a result (Corollary \ref{ga}) that contains Avramov's theorem as well as Gulliksen's one (Corollary \ref{ga} in the case $A=B$, $\aaa=\bbb$) and Rodicio's (as his proof \cite[p. 61]{Ro} shows). But more important than unifying these three results is the fact that the proof is as elementary as Gulliksen's one and gives also Avramov's theorem. Avramov's original proof, even in the flat case, is not so elementary since it uses the existence of \emph{minimal} differential algebra resolutions, and since 1975 this was the only existing proof. In order to prove Avramov's theorem, taking a Cohen factorization \cite{AFH}, it is easy to restrict ourselves to the case when $B=A/\aaa$, which is how we present his result (Corollary \ref{Avramov}).\\

\hfill

A minimal set of generators of an ideal $\aaa$ of a (not necessarily noetherian) local ring $\locala$ is a set of generators such that their images in $\aaa/\mmm\aaa$ form a basis of this $k$ vector space.

\begin{prop}
Let $f:\locala \to \localb$ be a homomorphism of local rings, $\aaa$ an ideal of $A$, $\bbb$ an ideal of $B$ such that $f(\aaa)\subset \bbb$. Assume that $\aaa$ has a minimal set of generators $\{x_i\}$. Let $\{f(x_i)\}\cup \{y_j\}$ be a set of generators of the ideal $\bbb$. Let $E$ be the Koszul complex associated to the set of generators $\{x_i\}$ of the ideal $\aaa$ and $M$ the Koszul complex associated to the set of generators $\{f(x_i)\}\cup \{y_j\}$ of the ideal $\bbb$, so we have a canonical homomorphism
\[ \bar{\alpha}: \HH_1(E)\otimes_{A/\aaa}B/\bbb \to \HH_1(M). \]
If there exists some $r>0$ such that $\Tor_{2r}^A(A/\aaa,k)=0$ then for any homomorphism of $B/\bbb$-modules $\varphi: \HH_1(M) \to B/\bbb$ we have
\[ Im(\varphi \circ \bar{\alpha} )\subset \nnn/\bbb. \]
\end{prop}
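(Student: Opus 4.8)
The plan is to prove the contrapositive: assuming $Im(\varphi\circ\bar\alpha)\not\subseteq\nnn/\bbb$, I would produce a nonzero element of $\Tor^A_{2j}(A/\aaa,k)$ for \emph{every} $j\ge 1$, which for $j=r$ contradicts the hypothesis. The scheme is Gulliksen's, but with the usual freeness hypothesis on a Koszul homology module replaced by the single cycle that $\varphi$ supplies. First one unwinds the hypothesis: if $\varphi(\bar\alpha(w))$ is a unit of the local ring $B/\bbb$ for some $w$, then — writing $w$ as a $B/\bbb$-combination of elements $\eta\otimes 1$ with $\eta\in\HH_1(E)$, and using that a $B/\bbb$-combination of non-units is a non-unit — there is $\eta\in\HH_1(E)$ with $\varphi(\bar\alpha(\eta\otimes 1))$ a unit; choosing a cycle $z\in Z_1(E)$ representing $\eta$, with image $z'\in Z_1(M)$, and rescaling $\varphi$, we may assume $\varphi([z'])=1$. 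Two remarks are then recorded: since $\{x_i\}$ is a \emph{minimal} generating set, any cycle $\sum c_i e_i\in Z_1(E)$ has all $c_i\in\mmm$ (otherwise some $x_i$ would be superfluous), so $z\in\mmm E$; and $1\mapsto[z']$ is split by $\varphi$, so $[z']$ generates a \emph{free direct summand} of $\HH_1(M)$, say $\HH_1(M)=(B/\bbb)[z']\oplus N$.

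Next I would set up two differential graded $\Gamma$-algebra resolutions by Tate's construction. Over $A$, fix a resolution $F=E\langle T;\ldots\rangle$ of $A/\aaa$ obtained from the Koszul complex $E$ by adjoining a divided power variable $T$ of degree $2$ with $dT=z$ and then variables killing the homology in successive degrees. Over $B$, build a resolution $\mathcal M$ of $B/\bbb$ starting from $M$: adjoin a divided power variable $T'$ of degree $2$ with $dT'=z'$, divided power variables $T'_\rho$ with $dT'_\rho=z'_\rho$ for a generating family $\{[z'_\rho]\}$ of the complement $N$, and then higher variables. The one step that is not formal — and this is exactly where Gulliksen's device is adapted to the present situation — is that, \emph{because $(B/\bbb)[z']$ is a free direct summand} of $\HH_1(M)$, the construction of $\mathcal M$ can be carried out so that $T'$ occurs in no later differential; i.e.\ $\mathcal M=\mathcal M'\langle T';\,dT'=z'\rangle$, with $\mathcal M'$ the sub-DG-algebra on the remaining variables. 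Granting this, the augmentation $\mathcal M'\twoheadrightarrow\HH_0(\mathcal M')=B/\bbb$ extends to a morphism of DG $\Gamma$-algebras $\Phi_{\mathcal M}\colon\mathcal M=\mathcal M'\langle T'\rangle\to(B/\bbb)\langle\xi\rangle$ with $T'\mapsto\xi$, sending all Koszul variables, all $T'_\rho$ and all higher variables to $0$; here $(B/\bbb)\langle\xi\rangle$ is the divided power algebra on one variable $\xi$ of degree $2$, with zero differential.

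To conclude, I would compare the two resolutions. Since $F$ is a bounded below complex of free $A$-modules generated by variables, $F\otimes_A B$ is a semifree DG algebra over $B$ with $\HH_0=B/\aaa B$, so the surjection $B/\aaa B\to B/\bbb$ lifts to a morphism of DG algebras $\Theta\colon F\otimes_A B\to\mathcal M$, and after normalizing $\Theta$ on the degree $\le 1$ part ($e_i\otimes1\mapsto$ the corresponding Koszul variable of $M$) one gets $\Theta(T\otimes 1)=T'$. Then $\Phi:=\Phi_{\mathcal M}\circ\Theta\colon F\otimes_A B\to(B/\bbb)\langle\xi\rangle$ is a morphism of DG $\Gamma$-algebras with $\Phi(T\otimes 1)=\xi$. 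Tensoring with $l=B/\nnn$ over $B$, and using $(B/\bbb)\langle\xi\rangle\otimes_B l=l\langle\xi\rangle$ and $(F\otimes_A B)\otimes_B l=F\otimes_A l$ (which computes $\Tor^A_*(A/\aaa,l)$), one obtains a morphism of complexes $F\otimes_A l\to l\langle\xi\rangle$. For each $j$, the element $T^{(j)}\otimes 1\in(F\otimes_A l)_{2j}$ is a cycle, because $d(T^{(j)})=zT^{(j-1)}$ while $z\otimes 1=0$ in $E_1\otimes_A l$ ($z\in\mmm E_1$ and $A\to l$ kills $\mmm$), and its homology class is carried to $\xi^{(j)}\ne 0$; hence $\Tor^A_{2j}(A/\aaa,l)\ne 0$. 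As $l$ is a $k$-vector space, $\Tor^A_{2j}(A/\aaa,l)=\Tor^A_{2j}(A/\aaa,k)\otimes_k l$, so $\Tor^A_{2j}(A/\aaa,k)\ne 0$ for all $j\ge 1$, and the case $j=r$ is the desired contradiction.

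The hard part will be the splitting $\mathcal M=\mathcal M'\langle T'\rangle$ in the second step: showing that the divided power variable killing $z'$ detaches from the resolution of $B/\bbb$. This is the point where Gulliksen's proof — which treats the case of an entirely free Koszul homology module, killing a basis of it at once — must be reworked with only the single class $[z']$ at hand: one checks inductively on the degree that, modulo the fixed complement $N$, the homology being killed stays equal to the free rank-one module $(B/\bbb)[z']$, so that whatever coefficients of $T'$ could enter a later differential lie in $\bbb$ and can be removed by a change of variables. The remaining ingredients — existence of $F$ and $\mathcal M$, existence and normalization of the comparison morphism $\Theta$, and the flat base change for $\Tor$ — are routine.
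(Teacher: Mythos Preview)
Your approach is genuinely different from the paper's, and the point you flag as ``the hard part'' is indeed where the difficulty concentrates---your sketch of it is not a proof.

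The paper does \emph{not} split off a divided-power variable from the resolution of $B/\bbb$. Instead it follows Gulliksen's original device: on the $2$-skeleton $N=B\langle X_i,Y_j,S_u,T_v\rangle$ of a resolution $P\supset G\otimes_A B$ one defines (via \cite[Lemma 1.3.4]{GL}) a degree $-2$ derivation $D$ with $D(S_u)=\beta_u$, $D(T_v)=\gamma_v$, $D(M)=0$, where $\beta_u$ lifts $\varphi\bar\alpha(\overline{s_u}\otimes 1)$. A one-line diagram check gives $D(Z_2N)\subset\bbb=B_0N$, and then acyclicity of $P$ lets $D$ extend degree by degree to all of $P$. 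Now $S_u^{(r)}\otimes 1$ is a cycle in $G\otimes_A k$ (minimality puts $s_u\in\mmm E_1$), hence a boundary by the $\Tor$ hypothesis, and applying $(D^r\otimes 1)$ pushes it to $\beta_u^r\otimes 1\in B_0(P\otimes_B l)=0$, so $\beta_u\in\nnn$. No splitting, no auxiliary target $(B/\bbb)\langle\xi\rangle$, no comparison map $\Theta$ are needed; the derivation does all the work.

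Your route can be salvaged, but not by the inductive ``change of variables'' you outline---that argument stalls once the top $T'$-coefficient $c_J$ of a cycle has positive degree, because you only know $c_J$ is a cycle in $\mathcal M'^{(n)}$, not that $\mathcal M'^{(n)}$ is acyclic there. The clean fix is to build $\mathcal M'$ \emph{first}: apply Tate's process to $M\langle T'_\rho\rangle$ killing only $H_{\ge 2}$, so that $H_*(\mathcal M')=B/\bbb$ in degree $0$ and $(B/\bbb)[z']$ in degree $1$; then the short exact sequence $0\to\mathcal M'\to\mathcal M'\langle T'\rangle\to\mathcal M'\langle T'\rangle[-2]\to 0$ gives a long exact sequence whose connecting map $H_0\to H_1(\mathcal M')$ is multiplication by $[z']$, an isomorphism precisely because $(B/\bbb)[z']$ is free of rank one, forcing $H_{\ge 1}(\mathcal M'\langle T'\rangle)=0$. (Incidentally, Gulliksen's original proof for free $H_1$ also uses the derivation, not a splitting; your description of it is off.) Once the splitting is in hand your remaining steps are fine, but the derivation approach reaches the conclusion with considerably less scaffolding.
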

\begin{proof}
Let $E=A<X_i;\; dX_i=x_i>$ (notation as in \cite{GL}), $M=B<X_i,Y_j;\; dX_i=f(x_i),\; dY_j=y_j>$. Let $\{s_u\}$ be a set of cycles in $E_1$ such that their classes $\{\overline{s_u}\}$ in $\HH_1(E)$ generate $\HH_1(E)$. Since $\{x_i\}$ is a minimal set of generators of $\aaa$, we have $s_u\in \mmm E_1$ for all $u$.

Let $\varphi: \HH_1(M) \to B/\bbb$ be a homomorphism of $B/\bbb$-modules. Let $\beta_u\in B$ be elements whose classes in $B/\bbb$ are $\varphi \bar{\alpha} (\overline{s_u}\otimes 1)$. Let $G$ be a DG resolution of the $A$-algebra $A/\aaa$ with 1-skeleton $E$ (that is, with the notation of \cite[p.11]{GL}, $\mathrm{F}_1G=E$) and 2-skeleton $F_2G=E<S_u;\; dS_u=s_u>$. Let $P$ be a DG resolution of the $B$-algebra $B/\bbb$ with 2-skeleton $N=B<X_i,Y_j,S_u,T_v;\; dT_v=t_v>$ where $\mathrm{deg}(T_v)=2$ and $\{\overline{t_v}\}$ generates $\HH_1(M<S_u;\;dS_u=\alpha(s_u)>)$ and containing $G\otimes_A B$ (by \cite[Theorem 1.2.3]{GL} there exists such $P$).

By \cite[Lemma 1.3.4]{GL} there exists a derivation $D:N\to N$ of degree -2 such that $D(S_u)=\beta_u$, $D(T_v)=\gamma_v$, $D(M)=0$, where $\gamma_v \in B$ represents $\varphi (\widetilde{t_v})\in B/\bbb$ (where $\widetilde{t_v}\in \HH_1(M)$ is the class of $t_v\in M$). For a complex $X$ we will denote by $Z_iX\subset X_i$ the submodule of degree $i$ cycles and similarly the boundaries by $B_iX\subset X_i$. The diagram
\[
\begin{tikzcd}
N_2 \arrow[rr, "D" ]\arrow[d, "d" ]& & N_0=B \arrow[d]\\
Z_1N=Z_1M \arrow[r] & \HH_1(M) \arrow[r, "\varphi"] & B/\bbb
\end{tikzcd}
\]
is then commutative. In particular, $D(Z_2N)\subset \bbb =B_0M =B_0N$, and therefore we can extend $D$ to the 3-skeleton $\mathrm{F}_3P$ of $P$. Inductively, once we have $D:\mathrm{F}_iP\to \mathrm{F}_iP$, since $D(Z_i(\mathrm{F}_iP))\subset Z_{i-2}(\mathrm{F}_iP)=B_{i-2}(\mathrm{F}_iP)$, we can extend $D$ to $\mathrm{F}_{i+1}P$, and then we have
\[ D:P\to P. \]

Since $s_u\in \mmm E_1$, we have $0=s_u\otimes 1\in G\otimes_Ak$, and then $S_u^{(r)}\otimes 1\in Z_{2r}(G\otimes_Ak)$, where $S_u^{(r)}$ is the $r$-th divided power of $S_u$. The hypothesis $\Tor_{2r}^A(A/\aaa,k)=0$ implies $B_{2r}(G\otimes_Ak)=Z_{2r}(G\otimes_Ak)$ and then the image of $S_u^{(r)}\otimes 1\in B_{2r}(G\otimes_Ak)$ in $G\otimes_Al$ belongs to $B_{2r}(G\otimes_Al)$. We have
\[ \beta_u^r\otimes 1=(D^r\otimes 1)(S_u^{(r)}\otimes 1)\in (D^r\otimes 1)(B_{2r}(G\otimes_Al))\subset \]
\[ (D^r\otimes 1)(B_{2r}(P\otimes_Bl))\subset B_0(P\otimes _Bl)=B_0(M\otimes_Bl)=0. \]
Then $\beta_u^r\in\nnn/\bbb$ and therefore $\beta_u\in\nnn/\bbb$ as desired.

\end{proof}

\begin{cor}\label{ga}
Let $f:\locala \to \localb$ be a homomorphism of noetherian local rings, $\aaa$ an ideal of $A$, $\bbb$ an ideal of $B$ such that $f(\aaa)\subset \bbb$. If $\mathrm{pd}_A(A/\aaa)<\infty$ and the first Koszul homology module associated to a set of generators of $\bbb$ is a free $B/\bbb$-module, then the canonical map
\[  \HH_2(A,A/\aaa,l)\to \HH_2(B,B/\bbb,l)  \]
vanishes.

\end{cor}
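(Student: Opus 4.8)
The plan is to deduce Corollary \ref{ga} from the Proposition by a standard dévissage connecting André--Quillen homology in degree $2$ to the first Koszul homology module. First I would set up the relevant long exact (Jacobi--Zariski) sequences: for the composite $A \to B \to B/\bbb$ one has
\[
\HH_2(A,B/\bbb,l) \to \HH_2(B,B/\bbb,l) \to \HH_1(A,B,B/\bbb\otimes l),
\]
but more to the point I would use the change-of-rings/base-change behaviour of $\HH_2$ together with the identification of $\HH_2(A,A/\aaa,-)$ in terms of the Koszul homology $\HH_1(E)$. Concretely, for an ideal $\aaa$ of a local ring with minimal generating set $\{x_i\}$ and Koszul complex $E$, there is a natural surjection (an isomorphism when the $x_i$ are part of a "minimal" presentation of the $A$-algebra $A/\aaa$ up to degree $2$) relating $\HH_2(A,A/\aaa,l)$ to $\HH_1(E)\otimes_{A/\aaa} l$; dually, $\HH_2(A,A/\aaa,l)^{\vee} \cong \HH^2(A,A/\aaa,l)$ receives the relevant data. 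The map $\bar\alpha\colon \HH_1(E)\otimes_{A/\aaa} B/\bbb \to \HH_1(M)$ of the Proposition is exactly the map induced on first Koszul homology by the algebra map $A/\aaa \to B/\bbb$, and under the above identifications it computes the map $\HH_2(A,A/\aaa,l)\to \HH_2(B,B/\bbb,l)$ after tensoring to $l$.

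The key step is then: to show a homomorphism of finitely generated modules over the local ring $B/\bbb$ (with residue field $l$) vanishes, it suffices to show that its composition with every $B/\bbb$-linear functional to $B/\bbb$ has image in the maximal ideal $\nnn/\bbb$ — provided the target module is free. This is precisely where the hypothesis "the first Koszul homology module associated to a set of generators of $\bbb$ is $B/\bbb$-free" enters: $\HH_1(M)$ is free, so a map into it is zero iff it is zero after reduction mod $\nnn$, iff all its coordinate functionals $\varphi\colon \HH_1(M)\to B/\bbb$ send it into $\nnn/\bbb$. Here I would choose a basis of the free module $\HH_1(M)$, let $\varphi$ run over the dual basis, and apply the Proposition: since $\mathrm{pd}_A(A/\aaa)<\infty$, we have $\Tor^A_n(A/\aaa,k)=0$ for all $n\gg 0$, in particular $\Tor^A_{2r}(A/\aaa,k)=0$ for some $r>0$ (take $2r$ larger than the projective dimension), so the Proposition applies and gives $\mathrm{Im}(\varphi\circ\bar\alpha)\subset \nnn/\bbb$ for every such $\varphi$. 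Hence $\bar\alpha = 0$, so $\HH_1(E)\otimes_{A/\aaa} B/\bbb \to \HH_1(M)$ vanishes, and after $\otimes_{B/\bbb} l$ the induced map $\HH_1(E)\otimes_{A/\aaa} l \to \HH_1(M)\otimes_{B/\bbb} l$ also vanishes.

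The final bookkeeping step is to translate "$\bar\alpha=0$" (hence its reduction mod $\nnn$ vanishes) into "$\HH_2(A,A/\aaa,l)\to \HH_2(B,B/\bbb,l)$ vanishes". For this I would invoke the standard description of the second André--Quillen homology of a surjection $R\to R/I$ with residue field $\kappa$: choosing a minimal presentation, $\HH_2(R,R/I,\kappa)$ is computed from the degree-$2$ part of a minimal model and agrees with (the relevant $\kappa$-reduction of) $\HH_1$ of the Koszul complex on a minimal generating set of $I$, naturally in $R\to R/I$. Applying this to both $A\to A/\aaa$ and $B\to B/\bbb$, the square relating $\HH_2(A,A/\aaa,l)\to\HH_2(B,B/\bbb,l)$ to $\HH_1(E)\otimes l \to \HH_1(M)\otimes l$ commutes, and the right vertical (or rather the horizontal comparison) is induced by $\bar\alpha$; since the latter is zero, so is the former.

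The main obstacle I expect is the last translation: making precise and correctly cited the natural identification of $\HH_2(A,A/\aaa,l)$ (and its functoriality in the pair $(A,\aaa)$) with the $l$-reduction of the first Koszul homology of a minimal generating set — one must be careful that the map $\bar\alpha$ of the Proposition really is the one induced on $\HH_2$, matching generators $\{x_i\}$ to $\{f(x_i)\}\cup\{y_j\}$ compatibly, and that reducing modulo $\nnn$ rather than working integrally is exactly what is needed since $\HH_2(-,-,l)$ is an $l$-vector space. Granting the Proposition and these standard facts about low-degree André--Quillen homology (as in \cite{An1974} or \cite{Ro}), the argument is then immediate.
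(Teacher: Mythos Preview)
Your approach is essentially the paper's: apply the Proposition (using $\mathrm{pd}_A(A/\aaa)<\infty$ to obtain $\Tor_{2r}^A(A/\aaa,k)=0$ for some $r>0$) together with freeness of $\HH_1(M)$ and coordinate functionals to kill the map $\HH_1(E)\otimes_{A/\aaa} l \to \HH_1(M)\otimes_{B/\bbb} l$, and then translate this into the vanishing of $\HH_2(A,A/\aaa,l)\to \HH_2(B,B/\bbb,l)$ via the standard identification of $\HH_2$ of a surjection with first Koszul homology (the paper simply cites \cite[15.12]{An1974} for this last step).

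One correction, however: from ``every coordinate functional $\varphi:\HH_1(M)\to B/\bbb$ satisfies $\mathrm{Im}(\varphi\circ\bar\alpha)\subset \nnn/\bbb$'' you may only conclude $\mathrm{Im}(\bar\alpha)\subset \nnn\,\HH_1(M)$, \emph{not} $\bar\alpha=0$. Your stated criterion (``a map into a free module is zero iff it is zero after reduction mod $\nnn$'') is false in general; e.g.\ multiplication by a nonzero element of $\nnn/\bbb$ on $B/\bbb$ itself satisfies the functional condition yet is nonzero. The paper states precisely the correct conclusion $\mathrm{Im}(\bar\alpha)\subset \nnn\,\HH_1(M)$, and this is all that is needed, since it already yields $\bar\alpha\otimes_{B/\bbb} l = 0$ and hence the vanishing of $\HH_1(E)\otimes_{A/\aaa} l \to \HH_1(M)\otimes_{B/\bbb} l$.
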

\begin{proof}
Let $E$, $M$ and $\bar{\alpha}: \HH_1(E)\otimes_{A/\aaa}B/\bbb \to \HH_1(M)$ be as in the Proposition. We have $\mathrm{Im}(\bar{\alpha})\subset \nnn\HH_1(M)$ (if $\bar{\alpha}(x)\in \HH_1(M)-\nnn\HH_1(M)$ then $\bar{\alpha}(x)$ is part of a basis of the free $B/\bbb$-module $\HH_1(M)$ and so there exists $\varphi:\HH_1(M)\to B/\bbb$ such that $\varphi\bar{\alpha} (x)=1\notin \nnn/\bbb)$. Therefore
\[  \HH_1(E)\otimes_{A/\aaa}l\to \HH_1(M)\otimes_{B/\bbb}l   \]
vanishes, and then the result follows from \cite[15.12]{An1974}.
\end{proof}

\begin{cor}\label{Avramov} (Avramov, \cite{Av1975}, \cite{Av1982})
Let $A$ be a noetherian local ring, $\aaa\neq 0$ an ideal of $A$. If $\mathrm{pd}_A(A/\aaa)<\infty$ then the canonical map
\[  \HH_2(A,k,k)\to \HH_2(A/\aaa,k,k)  \]
is injective.
\end{cor}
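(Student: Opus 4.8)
The plan is to deduce Corollary \ref{Avramov} from Corollary \ref{ga} by making the right choices of the data $(f,\aaa,\bbb)$ and then translating the vanishing statement of Corollary \ref{ga} into the injectivity statement we want. I would take $B=A/\aaa$, let $f\colon A\to B$ be the canonical surjection, and set $\bbb=0$. Then $f(\aaa)=0\subset\bbb$ is satisfied, $\mathrm{pd}_A(A/\aaa)<\infty$ is the hypothesis, and the residue field of $B$ is again $k$; also the empty set generates the zero ideal $\bbb$, so the associated first Koszul homology module is $0$, which is trivially a free $B/\bbb=B$-module. Hence Corollary \ref{ga} applies and tells us that the canonical map $\HH_2(A,A/\aaa,k)\to\HH_2(B,B/\bbb,k)=\HH_2(A/\aaa,A/\aaa,k)$ vanishes.

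Next I would rewrite both modules using the standard change-of-rings/base-change facts for André--Quillen homology. On the target side, $\HH_n(A/\aaa,A/\aaa,k)\cong\HH_n(A/\aaa,k,k)$ for all $n$ (this is the identity $\HH_n(S,S,M)$ versus $\HH_n(S,T,M)$ along $S\to T\to M$, here with $S=T=A/\aaa$), so the target of the vanishing map is precisely $\HH_2(A/\aaa,k,k)$. On the source side I would invoke the Jacobi--Zariski exact sequence for the composite $A\to A/\aaa\to k$:
\[
\cdots\to\HH_2(A,A/\aaa,k)\to\HH_2(A,k,k)\to\HH_2(A/\aaa,k,k)\xrightarrow{\ \partial\ }\HH_1(A,A/\aaa,k)\to\cdots
\]
Thus the map $\HH_2(A,k,k)\to\HH_2(A/\aaa,k,k)$ whose injectivity we want has kernel equal to the image of $\HH_2(A,A/\aaa,k)\to\HH_2(A,k,k)$.

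The remaining point is to see that this last map is zero, or equivalently that the composite $\HH_2(A,A/\aaa,k)\to\HH_2(A,k,k)\to\HH_2(A/\aaa,k,k)$ that Corollary \ref{ga} kills is, after the above identification, nothing but the map $\HH_2(A,A/\aaa,k)\to\HH_2(A/\aaa,A/\aaa,k)\cong\HH_2(A/\aaa,k,k)$ appearing in the Jacobi--Zariski sequence — i.e., that the two natural maps out of $\HH_2(A,A/\aaa,k)$ agree under the canonical identifications. This is a compatibility of the long exact sequences coming from functoriality of André--Quillen homology in the ring argument; concretely, applying $\HH_\ast(-,A/\aaa,k)$ versus $\HH_\ast(-,k,k)$ to the morphism of pairs $(A\to A/\aaa)\to(A/\aaa\to A/\aaa)$ gives a commuting ladder, and chasing it identifies the composite from Corollary \ref{ga} with the Jacobi--Zariski connecting-sequence map. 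Once that identification is in place, Corollary \ref{ga} forces $\HH_2(A,A/\aaa,k)\to\HH_2(A,k,k)$ to be zero, hence $\HH_2(A,k,k)\to\HH_2(A/\aaa,k,k)$ is injective.

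The main obstacle I expect is purely bookkeeping: correctly matching the "canonical map" in the statement of Corollary \ref{ga} with the transition maps of the Jacobi--Zariski exact sequence, and making sure the base changes along $A/\aaa\to k$ are handled consistently (the Proposition and Corollary \ref{ga} are stated with coefficients in $l$, here $l=k$, so no genuine Cohen-factorization reduction is needed — one is already in the surjective case). There is no hard analytic or homological input beyond the standard properties of André--Quillen homology recalled in \cite{An1974}; the content of the corollary is entirely that these standard exact sequences let one re-express the vanishing of Corollary \ref{ga} as the asserted injectivity.
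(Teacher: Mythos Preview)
Your choice of data makes the application of Corollary \ref{ga} vacuous. With $B=A/\aaa$ and $\bbb=0$ the target of the map in Corollary \ref{ga} is $\HH_2(B,B/\bbb,k)=\HH_2(A/\aaa,A/\aaa,k)$, and Andr\'e--Quillen homology of the identity map vanishes: $\HH_n(R,R,M)=0$ for all $n\geq 1$. So Corollary \ref{ga} here only says that a map into the zero module is zero, which carries no information. The ``identification'' $\HH_n(A/\aaa,A/\aaa,k)\cong\HH_n(A/\aaa,k,k)$ you invoke is simply false; the left-hand side is $0$ while the right-hand side is typically not. Even granting that identification formally, your argument would become circular: you would only know that the \emph{composite} $\HH_2(A,A/\aaa,k)\to\HH_2(A,k,k)\to\HH_2(A/\aaa,k,k)$ vanishes, and concluding that the first arrow vanishes from this requires the second arrow to be injective --- which is exactly the statement you are trying to prove.

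The paper's (implicit) choice is different: take $f=\mathrm{id}_A\colon A\to A$ and $\bbb=\mmm$. Then $B/\bbb=k$, so the first Koszul homology module on any generating set of $\mmm$ is a $k$-vector space and hence free over $B/\bbb$. Corollary \ref{ga} now gives directly that
\[
\HH_2(A,A/\aaa,k)\longrightarrow \HH_2(A,A/\mmm,k)=\HH_2(A,k,k)
\]
vanishes, and this is precisely the map appearing in the Jacobi--Zariski exact sequence for $A\to A/\aaa\to k$, whence the injectivity of $\HH_2(A,k,k)\to\HH_2(A/\aaa,k,k)$. No further compatibility check is needed.
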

\begin{proof}
It follows from Corollary \ref{ga} and the Jacobi-Zariski exact sequence
\[  \HH_2(A,A/\aaa,k)\to \HH_2(A,k,k)\to \HH_2(A/\aaa,k,k).  \]
\end{proof}

\end{document}